\newfont{\bb}{msbm10 at 12pt}
\newfont{\tbb}{msbm10 at 8pt}
\def\r{\hbox{\bb R}}
\def\h{\hbox{\bb H}}
\def\s{\hbox{\bb S}}
\def\th{\hbox{\tbb H}}
\def\R{\r}
\def\H{\mathbb{H}}
\def\stop{\hfill$\Box$}
\newcommand{\meta}[2]{\langle #1,#2 \rangle }
\numberwithin{equation} {section}
\begin{document}

\theoremstyle{plain}\newtheorem{lemma}{Lemma}[section]
\theoremstyle{plain}\newtheorem{proposition}{Proposition}[section]
\theoremstyle{plain}\newtheorem{theorem}{Theorem}[section]
\theoremstyle{plain}\newtheorem*{main theorem}{Main Theorem} 
\theoremstyle{plain}\newtheorem{example}{Example}[section]
\theoremstyle{plain}\newtheorem{remark}{Remark}[section]
\theoremstyle{plain}\newtheorem{corollary}{Corollary}[section]
\theoremstyle{plain}\newtheorem*{corollary-A}{Corollary}
\theoremstyle{plain}\newtheorem{definition}{Definition}[section]
\theoremstyle{plain}\newtheorem{acknowledge}{Acknowledgment}
\theoremstyle{plain}\newtheorem{conjecture}{Conjecture}

\begin{center}
\rule{15cm}{1.5pt} \vspace{.4cm}

{\Large \bf Hypersurfaces with Nonegative Ricci Curvature in $\H^{n+1}$} 

\vspace{0.4cm}

{\large Vincent Bonini$\, ^\dag$, Shiguang Ma$\, ^\ddag$\footnote{The author is the corresponding author and
partially supported by NSFC 11571185},
and Jie Qing$\,^\star$\footnote{The author is partially supported by NSF DMS-1608782}}\\

\vspace{0.3cm} 
\rule{15cm}{1.5pt}
\end{center}

\vspace{.2cm}
\noindent$\mbox{}^\dag$ Department of Mathematics, Cal Poly State University, San Luis Obispo, CA 93407; \\e-mail: vbonini@calpoly.edu \vspace{0.2cm}

\noindent$\mbox{}^\ddag$ Department of Mathematics, Nankai University, Tianjin, China; \\e-mail: msgdyx8741@nankai.edu.cn \vspace{0.2cm}

\noindent $\mbox{}^\star$ Department of Mathematics, University of California, Santa Cruz, CA 95064; \\
e-mail: qing@ucsc.edu

\begin{abstract} Based on properties of $n$-subharmonic functions we show that a complete, noncompact, properly embedded hypersurface with 
nonnegative Ricci curvature in hyperbolic space has an asymptotic boundary at infinity of at most two points. Moreover, the presence of two points in the asymptotic 
boundary is a rigidity condition that forces the hypersurface to be an equidistant hypersurface about a geodesic line in hyperbolic space. This gives an affirmative 
answer to the question raised by Alexander and Currier in \cite{AlCu2}.
\end{abstract}


\section{Introduction}\label{Sec:Intro}

For an immersed hypersurface $\phi:M^n \to \h^{n+1}$ with an appropriate orientation, we recall the following successively stronger  pointwise convexity conditions determined by the 
principal curvatures $\kappa_1,\dots,\kappa_n$:
$$
\begin{array}{ll}
\kappa_i >  0 & \text{\em (strict) convexity}\\
\kappa_i (\sum_{j=1}^{n}\kappa_j)  \geq n-1 + \kappa_i^2 & \text{\em nonnegative Ricci curvature}\\
\kappa_i \kappa_j \geq 1 & \text{\em nonnegative sectional curvature}\\
\kappa_i \geq 1 & \text{\em horospherical convexity}
\end{array}
$$
The influence of curvature conditions on the asymptotic boundary of a complete noncompact hypersurface in hyperbolic space $\h^{n+1}$ has been studied in 
\cite{Eps1, Eps2, AlCu,AlCu2, BMQ}. In \cite{Eps2} it is shown that the asymptotic boundary of a complete proper embedding of $\r^2$ into $\h^3$ with nonnegative 
Gaussian curvature has a single point asymptotic boundary. In \cite{AlCu,AlCu2} it is shown that a complete, noncompact, embedded hypersurface $\phi:M^n\to \h^{n+1}$ with 
nonnegative sectional curvature has at most two points in its asymptotic boundary. Moreover the presence of two points in the boundary at infinity is a rigidity condition that 
forces $\phi(M)$ to be an equidistant hypersurface. Recently, in \cite{BMQ} it is shown that the same conclusion as in \cite{AlCu, AlCu2} holds for immersed hypersurfaces.
\\

In \cite{AlCu, AlCu2} it is observed that a properly embedded strictly convex hypersurface in hyperbolic space can be realized as a global vertical graph of a height function 
over a domain in a horosphere and that the height function is subharmonic when restricted to any 2-plane when the hypersurface has nonnegative sectional curvature. Then, based on the theory of subharmonic functions, Alexander and Currier managed to show that the asymptotic boundary is totally disconnected. In \cite{AlCu2} the question was raised as to whether or not nonnegative Ricci curvature suffices for their asymptotic boundary theorem. In this note we affirmatively answer their question.

\begin{main theorem}
For $n\geq 3$, suppose that $\Sigma$ is an $n$-dimensional complete, noncompact hypersurface properly embedded in hyperbolic space $\h^{n+1}$ with nonnegative Ricci curvature. 
Then $\partial_{\infty}\Sigma$ consists of at most two points. The case that $\partial_{\infty}\Sigma$ consists of two points is a rigidity condition that forces $\Sigma$ to be an 
equidistant hypersurface about a geodesic line.
\end{main theorem}

The classification for complete and noncompact Riemannian manifolds with nonnegative Ricci curvature is very interesting and complicated subject (cf. \cite{SS}, for example). 
Our main theorem, on the other hand, classifies those that are properly embedded in hyperbolic space. In fact, our main theorem tells that there are only two classes: 
one is the cylinder $\mathbb{R}\times \mathbb{S}^{n-1}$ and the other consists of nonnegative Ricci curvature metrics on $\mathbb{R}^n$. 

\begin{corollary-A} Suppose that $(M^n, g)$ is a complete and noncompact Riemannian manifold with nonnegative Ricci curvature that can be properly 
isometrically embedded in hyperbolic space $\h^{n+1}$. Then $(M^n, g)$ is either the standard cylinder $\r\times \s^{n-1}$ or it is a complete nonnegative Ricci curvature metric on $\r^n$.
\end{corollary-A}

As suggested for embedded hypersurfaces in \cite{AlCu2}, we realize the rigidity result ultimately as consequence of the Cheeger-Gromoll splitting theorem \cite{CG}, and the Gauss and 
Codazzi equations. In fact, our proof of the rigidity part is local in nature and therefore does not need the embeddedness assumption. The key to our proof is to show that Ricci flat directions are in fact principal directions of the hypersurface for $n\geq 3$ (cf. Lemma \ref{Lem:PrincDir}). To resolve this issue we appeal to the fact that the Ricci operator and the shape operator of a hypersurface in any space form are pointwise simultaneously diagonalizable thanks to \cite{Bour}. 
\\

To show that a connected asymptotic boundary can only be a single point, we pursue an avenue closely related to \cite{AlCu,AlCu2}. We observe that hypersurfaces embedded in 
hyperbolic space with nonnegative Ricci curvature give rise to height functions that are Euclidean $n$-subharmonic. Then we apply the theory of 
$n$-subharmonic functions to show that hypersurfaces embedded in hyperbolic space with nonnegative Ricci curvature must have asymptotic boundaries of Hausdorff dimension zero 
and are therefore a single point when connected. This is not surprising since the analysis of $n$-subharmonic functions in dimension $n$ is somewhat similar to that of subharmonic functions in dimension 2. However, it is rather surprising that our calculation for Ricci curvature in dimensions larger than 2 (cf. Theorem \ref{Lem:n-subharmonic}) 
goes perfectly in line with what was observed in \cite[Theorem 2.1]{AlCu} for Gaussian curvature in dimension 2.
\\

Finally, we echo the question raised in \cite{AlCu2} as to whether or not the Main Theorem in this paper still holds for immersed hypersurfaces.
 

\section{Asymptotic Boundary of Multiple Components}\label{Sec:MultCompBd}

In this section we show that complete noncompact hypersurfaces immersed in hyperbolic space with nonnegative Ricci curvature and multiple component asymptotic boundaries
are in fact equidistant hypersurfaces. Our approach is very much local in nature, hence we do not need to assume the hypersurfaces are embedded. 
\\

Let $(M^n, g)$ be a complete Riemmanian manifold with nonnegative Ricci curvature. If $(M^n,g)$ has a line, then by the Cheeger-Gromoll splitting theorem \cite{CG} 
(see also \cite{Top} for dimension 2), 
$M$ splits isometrically as the product $M \cong {\r} \times N^{n-1}$ where $(N^{n-1}, g_{N})$ is a complete $(n-1)$-manifold with nonnegative Ricci curvature. 
Naturally, the product structure carries to the level of the tangent bundle and the Levi-Civita connection $\nabla$ on $M$, 
which forces the Riemannian curvature tensor of $(M^n,g)$ to split accordingly. Hence, the factor $\R$ of the product $M \cong {\r} \times N$ represents 
a flat direction in $M$.
\\

To be more precise, let $(x_1, x_2,\dots,x_n)$ denote local coordinates on a neighborhood of $M$ adapted to the product structure $M \cong {\r} \times N$ where $x_1 = t$ is 
the coordinate corresponding to distance in the factor $\r$ and $(x_2,\dots,x_n)$ are local coordinates on $N$. Then locally the metric 
$$
g= dt^2+g_{N}
$$ 
where $g_N$ is independent of $t$ and $\frac{\partial}{\partial t}$ is a flat direction. The Riemannian curvature tensor 
\begin{align}\label{Eq:GeodRiemCurv}
R_{ijkt} = 0
\end{align}
for all $i,j,k \in \{1,\dots,n\}$. Therefore,
\begin{equation}\label{Eq:GeodRicciFlat}
R_{it} = 0
\end{equation}
for all $i \in \{1,\dots,n\}$. In other words, the flat direction $\frac{\partial}{\partial t}$ is pointwise an eigendirection for the Ricci curvature operator corresponding to the eigenvalue $0$.
\\

It turns out that the key to establish rigidity is to know that the flat direction is a principal direction of the hypersurface. A pleasantly surprising fact due to Bourguignon \cite{Bour} 
(see also \cite{Besse} Corollary 16.17) is that the Ricci curvature form and the second fundamental form commute since the second fundamental form of a hypersurface in a space form is always a Codazzi tensor.

\begin{lemma}\label{Lem:PrincDir} Suppose that $\phi: M^n\to \mathbb{H}^{n+1}$ is an isometric immersion where $M^n$ has nonnegative Ricci curvature and splits 
as $\r\times N$. Then, for $n\geq 3$, the flat direction is a principal direction for $\phi$.
\end{lemma}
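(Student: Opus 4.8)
The plan is to argue pointwise in a frame adapted to the splitting and to play two consequences of the Gauss equation against each other. Fix $p\in M$ and choose an orthonormal frame $\{e_t,e_2,\dots,e_n\}$ with $e_t=\partial/\partial t$ the flat direction and $V=\mathrm{span}\{e_2,\dots,e_n\}$ tangent to the $N$ factor. Let $S$ denote the shape operator, set $\alpha=\langle Se_t,e_t\rangle$, and let $w\in V$ be the orthogonal projection of $Se_t$ onto $V$, so that $h_{ta}:=\langle Se_t,e_a\rangle=\langle w,e_a\rangle$ for $a\in\{2,\dots,n\}$ and $B_{ab}:=\langle Se_a,e_b\rangle$ is the second fundamental form restricted to $V$. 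With this notation the assertion to be proved is simply $w=0$, since $w=0$ is equivalent to $Se_t=\alpha e_t$.

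The splitting forces every component of the curvature tensor carrying an index in the flat direction to vanish, so by (\ref{Eq:GeodRiemCurv}) (and the curvature symmetries) both $R_{abct}=0$ and $R_{tbtc}=0$ for all $a,b,c\in\{2,\dots,n\}$. I would substitute these into the Gauss equation $R_{ijkl}=-(g_{ik}g_{jl}-g_{il}g_{jk})+(h_{ik}h_{jl}-h_{il}h_{jk})$. The one-flat-index relation $R_{abct}=0$ reduces to $B_{ac}\,h_{tb}=h_{ta}\,B_{bc}$, which says that for each fixed $c$ the column $(B_{ac})_a$ is parallel to $w$; assuming $w\neq 0$ and using the symmetry $B_{ac}=B_{ca}$, this forces $B=\mu\,w\otimes w$ for some scalar $\mu$. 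The two-flat-index relation $R_{tbtc}=0$ reduces to the sectional-curvature identity $\alpha\,B_{bc}-h_{tb}h_{tc}=g_{bc}$, that is, $\alpha B-w\otimes w=I_V$ as quadratic forms on $V$.

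Combining the two identities is the punchline: substituting $B=\mu\,w\otimes w$ into $\alpha B-w\otimes w=I_V$ gives $(\alpha\mu-1)\,w\otimes w=I_V$. The left-hand side has rank at most one, whereas $I_V$ has rank $\dim V=n-1$. For $n\ge 3$ we have $n-1\ge 2$, so the equality is impossible and the assumption $w\neq 0$ collapses; hence $w=0$ and $e_t$ is a principal direction. I expect the one genuinely delicate point to be the step that produces the rank-one form $B=\mu\,w\otimes w$ from the bilinear relation $B_{ac}h_{tb}=h_{ta}B_{bc}$ --- this is where $w\neq 0$ is invoked and where the symmetry of $B$ must be used to upgrade ``each column proportional to $w$'' to ``$B$ a multiple of $w\otimes w$''. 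It is also worth noting exactly where the hypothesis enters: only in the final rank count $\dim V=n-1\ge 2$, which is precisely the estimate that fails when $n=2$. Finally, this mechanism is consistent with the commutation of the Ricci and shape operators recorded above --- equivalently the polynomial relation $\mathrm{Ric}=-(n-1)I+HS-S^2$ --- which guarantees that $S$ preserves the Ricci-flat eigenspace containing $e_t$; but since $\kappa\mapsto-(n-1)+H\kappa-\kappa^2$ is two-to-one that eigenspace may be degenerate, so it is the finer product information $R_{abct}=0$ of (\ref{Eq:GeodRiemCurv}), rather than merely the Ricci-flatness $R_{it}=0$ of (\ref{Eq:GeodRicciFlat}), that is needed to single out $e_t$ as principal.
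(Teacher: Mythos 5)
Your proof is correct, and it takes a genuinely different route from the paper's. The paper's proof never uses the full strength of \eqref{Eq:GeodRiemCurv}: it cites Bourguignon's theorem that the Ricci and shape operators of a hypersurface in a space form commute (the second fundamental form being Codazzi), so the Ricci-flat eigenspace $V_0 \ni e_t$ is spanned by principal directions; on $V_0$ each principal curvature solves $\kappa^2 - H\kappa + (n-1) = 0$, and since nonnegative Ricci forces all principal curvatures positive, two indices in $V_0$ realizing different roots (or the larger root twice) would make the remaining positive curvatures push $\sum_j \kappa_j$ above $H$ when $n \geq 3$; hence $V_0$ lies in a single principal eigenspace and the flat direction is principal. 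You instead feed the untraced vanishing \eqref{Eq:GeodRiemCurv} directly into the Gauss equation: $R_{abct}=0$ yields $B_{ac}w_b = w_a B_{bc}$, which for $w \neq 0$ upgrades via symmetry of $B$ to $B = \mu\, w\otimes w$, while $R_{tbtc}=0$ yields $\alpha B - w\otimes w = I_V$, and the rank count $1 < n-1$ forces $w=0$; I checked both Gauss reductions and the rank-one step, and they are sound. Your route is more elementary and self-contained (no Codazzi/Bourguignon input) and in fact never uses nonnegative Ricci or positivity of the principal curvatures beyond the hypothesized splitting, so it proves the lemma for any isometric immersion of a Riemannian product $\mathbb{R}\times N^{n-1}$ into $\mathbb{H}^{n+1}$, $n \geq 3$; the paper's route, at the cost of the external citation and of genuinely needing convexity, gives the slightly finer conclusion that all of $V_0$ is principal with the smaller root $\kappa_0$. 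Your closing remark is also exactly right: the traced information \eqref{Eq:GeodRicciFlat} alone cannot single out $e_t$ because $\kappa \mapsto -(n-1)+H\kappa - \kappa^2$ is two-to-one, and that is precisely the gap the paper closes with positivity while you close it with \eqref{Eq:GeodRiemCurv}. As a bonus, once $w=0$ your identity $\alpha B = I_V$ is exactly \eqref{Eq:2ndFormTang}, so your computation already delivers the first step of the proof of Theorem \ref{Thm:TwoEndRigidity}.
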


\begin{proof}
It is well-known that the second fundamental form of a hypersurface in a space form is a Codazzi tensor.
Due to Bourguignon \cite{Bour} (see also \cite{Besse} Corollary 16.17), it follows that the Ricci operator and the shape operator then commute. 
Hence, the Ricci operator and the shape operator preserve each other's invariant subspaces and are therefore pointwise simultaneously diagonalizable.  
Let $V_0$ denote the eigenspace of the Ricci operator that corresponds to the eigenvalue $0$ at a point on the hypersurface. Clearly, dim$(V_0) \geq 1$ 
since it contains at least the flat direction. 
\\

Let $\{e_1,\dots,e_n\}$ denote an orthonormal basis of principal directions with the principal curvatures $\kappa_i$ at the point. Up to linear combinations 
of the principal directions in their respective eigenspaces, we may assume that $\{e_1,\dots,e_n\}$ simultaneously diagonalizes the Ricci curvature operator. Moreover, since 
dim$(V_0) \geq 1$, up to reordering we may assume $V_0 = {\text{span}}\{e_1,\dots,e_k\}$ for some $1\leq k \leq n$. Clearly, if $k = 1$, then the flat direction
is a principal direction. Otherwise, let us assume $k \geq 2$.  Then, for each $i=1,\dots, k$,
\begin{equation}\label{Eq:PCRicciFlat}
0=Ric(e_i) = \kappa_i(\sum_{j =1}^{n}\kappa_j)-\kappa_i^2 - (n-1) = \kappa_i H -\kappa_i^2 - (n-1),\vspace{-0.5cm}
\end{equation}
where $\displaystyle H = \sum_{j =1}^{n}\kappa_j$ is the mean curvature. From \eqref{Eq:PCRicciFlat} we see\vspace{-0.3cm}
$$\kappa_i = \frac{H \pm \sqrt{H^2-4(n-1)}}{2} \quad \text{for} \, \, i =1,\dots,k.\vspace{-0.3cm}$$
But then, since $n \geq 3$, $\kappa_i >0$ for all $i = 1, 2, \dots, n$, and $k \geq 2$, we must have
$$\kappa_i =  \kappa_0 = \frac{H - \sqrt{H^2-4(n-1)}}{2} \quad \text{for} \, \, i =1,\dots,k.$$
Therefore, every vector in $V_0 $ is a principal direction associated with the principal curvature $\kappa_0$. 
Thus, the flat direction is a principal direction at any point on the hypersurface. 
\end{proof}

It is interesting to notice that Lemma \ref{Lem:PrincDir} works only for dimensions larger than 2. For flat cases in dimension 2 one needs \cite{VV} instead 
(cf. please see \cite{BMQ} for an alternative proof in dimension 2).  We are now in a position to apply the Codazzi equations to establish the rigidity result.

\begin{theorem} \label{Thm:TwoEndRigidity}
For $n\geq 3$, let $\phi: M^n \to \h^{n+1}$ be an isometric immersion of a complete noncompact manifold $(M^n,g)$ with nonnegative Ricci curvature. If the asymptotic boundary at infinity $\partial_{\infty}\phi(M)$ has more than one connected component, then $\phi(M)$ is an equidistant hypersurface about a geodesic line.
\end{theorem}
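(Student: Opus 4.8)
The plan is to manufacture a geodesic line in $M$ from the disconnectedness of the asymptotic boundary, invoke the Cheeger--Gromoll splitting together with Lemma~\ref{Lem:PrincDir} to force the second fundamental form into a rigid normal form, and then recognize $\phi(M)$ as an equidistant hypersurface through its principal curvatures. First I would show that $\partial_\infty\phi(M)$ having at least two connected components forces $M$ to contain a line. Choosing ideal points $p^+,p^-$ in distinct components and divergent sequences $x_i,y_i\in M$ whose images approach $p^+$ and $p^-$, the intrinsic minimizing segments from $x_i$ to $y_i$ must cross a fixed compact separating set (their endpoints limit into separated components, so they cannot be pushed off to infinity); recentering at a crossing point and passing to a subsequence yields a complete minimizing geodesic, i.e. a line. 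The Cheeger--Gromoll splitting theorem~\cite{CG} then gives $M\cong\r\times N$ with $g=dt^2+g_N$ and flat, parallel direction $T=\partial/\partial t$; by \eqref{Eq:GeodRicciFlat} $T$ is a null eigendirection of the Ricci operator, and by Lemma~\ref{Lem:PrincDir} it is a principal direction, $ST=\kappa_0 T$ with $\kappa_0>0$.

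Next I would diagonalize the shape operator by an orthonormal principal frame $\{e_1=T,e_2,\dots,e_n\}$ with $e_2,\dots,e_n$ spanning $TN$ and principal curvatures $\kappa_\alpha$. The Gauss equation in the ambient curvature $-1$ gives $K^M(T,e_\alpha)=-1+\kappa_0\kappa_\alpha$, and since $T$ is a flat direction this sectional curvature vanishes by \eqref{Eq:GeodRiemCurv}, forcing $\kappa_\alpha=1/\kappa_0$ for every $\alpha\ge2$. Thus $S$ has exactly two eigenvalues, $\kappa_0$ with multiplicity one and $1/\kappa_0$ with multiplicity $n-1$, with $S|_{TN}=\kappa_0^{-1}\,\mathrm{Id}$. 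To see these are constant I would feed the Codazzi equation $(\nabla_{\partial_t}S)\partial_\alpha=(\nabla_{\partial_\alpha}S)\partial_t$ into the product structure: using $\nabla T\equiv0$, the right-hand side equals $(\partial_\alpha\kappa_0)\,T$ while the left-hand side lies in $TN$, so $\partial_\alpha\kappa_0=0$ and $\nabla_{\partial_t}(S\partial_\alpha)=0$; since $S\partial_\alpha=\kappa_0^{-1}\partial_\alpha$ and $\nabla_{\partial_t}\partial_\alpha=0$, this yields $\partial_t(\kappa_0^{-1})=0$. Hence $\kappa_0$ is a global constant.

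It then remains to identify the model. The line $\gamma(t)=(t,p_0)$ maps to a unit-speed curve $c(t)=\phi(\gamma(t))$ whose ambient acceleration is $\bar\nabla_{\dot c}\dot c=\kappa_0\nu$ by the Gauss formula and $\nabla_T T=0$, i.e. a curve of constant geodesic curvature $\kappa_0$; its two ends approach the two distinct ideal points $p^\pm$, which rules out $\kappa_0\ge1$ (horocycles accumulate at a single ideal point and geodesic circles are bounded), so $\kappa_0=\tanh r_0$ for a unique $r_0>0$ and $1/\kappa_0=\coth r_0$. Finally I would run the normal exponential (focal) map $F(x)=\exp_{\phi(x)}(r_0\nu(x))$: the Jacobi field computation gives $dF(e_\alpha)=(\cosh r_0-\coth r_0\sinh r_0)E_\alpha=0$ for $\alpha\ge2$, while $dF(T)=(\cosh r_0)^{-1}E_1\ne0$, so $F$ has rank one with kernel $TN$, collapses each slice $N\times\{t\}$ to a point, and maps $M$ onto a geodesic $\gamma_0$. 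Then $\phi(M)\subset\{x:\mathrm{dist}(x,\gamma_0)=r_0\}$, and by completeness and connectedness $\phi(M)$ is exactly the equidistant hypersurface about the geodesic line $\gamma_0$.

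I expect the first step to be the main obstacle: manufacturing a genuine line from a merely disconnected asymptotic boundary in the immersed setting, where one must guarantee that separated boundary components keep the connecting minimizing segments inside a compact part of $M$ (this is where properness of the immersion enters). Once a line is in hand, the curvature normal form is a short Gauss--Codazzi computation and the final identification via the focal map is routine.
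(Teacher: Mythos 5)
Your proposal is correct and follows the paper's skeleton for the core of the argument: line from the disconnected boundary, Cheeger--Gromoll splitting, Lemma~\ref{Lem:PrincDir} to make the flat direction principal with curvature $\kappa_0$, the Gauss equation with \eqref{Eq:GeodRiemCurv} forcing $\kappa_\alpha = 1/\kappa_0$ on the $N$-directions, and the Codazzi equation with the product Christoffel symbols forcing both curvatures constant --- your frame computation $(\nabla_{\partial_t}S)\partial_\alpha=(\nabla_{\partial_\alpha}S)\partial_t$ is exactly the paper's \eqref{Eq:PCeq1}--\eqref{Eq:PCeq2} in operator form. Where you genuinely diverge is the endgame. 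The paper, having two distinct constant principal curvatures, quotes Currier \cite{Cu} to rule out $\kappa_0=1$ (which would force total umbilicity and hence a horosphere, contradicting the multi-component boundary) and then invokes Cartan's classification of isoparametric hypersurfaces \cite{Cartan} to identify the equidistant hypersurface. You instead give a self-contained construction: the image $c=\phi\circ\gamma$ of the line has $\bar\nabla_{\dot c}\dot c=\kappa_0\nu$ and $\bar\nabla_{\dot c}\nu=-\kappa_0\dot c$, so it is a constant-curvature curve whose ideal endpoints pin down $\kappa_0=\tanh r_0<1$, and the focal map $F$ then collapses the $N$-slices onto a core geodesic. This buys a proof independent of the classification literature, and your explicit compactness construction of the line also fills in a step the paper leaves implicit (``from the discussion above we may assume\dots'').

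Two spots in your endgame deserve tightening, though neither is fatal. First, your exclusion of $\kappa_0=1$ assumes the two ends of the constructed line converge to \emph{distinct} ideal points; a priori both rays could accumulate at the same point, in which case $c$ would be a horocycle. The clean patch is the paper's: if $\kappa_0=1$ then $1/\kappa_0=1$ as well, so the hypersurface is totally umbilic with all principal curvatures $1$, hence a horosphere, whose one-point asymptotic boundary contradicts the hypothesis (the case $\kappa_0>1$ is indeed excluded by properness of $c$ versus compactness of circles, as you say). Second, you assert that $F$ maps $M$ \emph{onto a geodesic}; this needs the observation that the osculating plane ${\rm span}\{\dot c,\nu\}$ is parallel along $c$ (immediate from the two Frenet equations above), so $c$ lies in a totally geodesic $\H^2$ as a hypercycle and $F(\gamma)$ is its core geodesic, after which $\phi(M)\subset\{x:\mathrm{dist}(x,\gamma_0)=r_0\}$ follows since the normal geodesics meet $\gamma_0$ orthogonally, and completeness plus simple connectivity of the cylinder for $n\geq 3$ upgrades the inclusion to equality.
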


\begin{proof}
Let $X_i=\phi_*(\frac{\partial}{\partial x_i})$ denote the local frame on the hypersurface adapted to the product structure. 
From the discussion above we may assume that $X_t=\phi_*(\frac{\partial}{\partial x_1})$ is a unit length flat direction that is 
orthogonal to $X_2,\dots,X_n$. In addition, due to Lemma \ref{Lem:PrincDir}, we may also assume that $X_t$ is a principal direction with 
principal curvature $\kappa_0$. Then, from \eqref{Eq:GeodRiemCurv} and Gauss equations we have
\begin{align} \label{Eq:Gauss}
0 &= R_{itjt} = R^{\mathbb{H}}_{itjt} + II_{ij}II_{tt} - II_{it}II_{tj}  \notag \\
& =- g_{ij} +  \kappa_0 II_{ij} \quad \text{for} \, \, i, j=2,\dots,n
\end{align}
and therefore
\begin{equation}\label{Eq:2ndFormTang}
II = \frac{1}{\kappa_0} g_{N}
\end{equation}
when restricted to directions tangential to $N$. That is, $\kappa_i = \frac 1{\kappa_o}$ for all $i = 2, \dots, n$.
Now since $g=dt^2+g_N$ with $g_N$ independent of $t$, it follows that the Christoffel symbols for $g$ satisfy
\begin{equation}\label{Eq:Christoffel}
\Gamma_{it}^j= \Gamma_{ti}^j=\Gamma_{ij}^t=0 \quad \text{for any} \, \, i,j \in \{1,\dots, n\}.
\end{equation}
Furthermore, from \eqref{Eq:2ndFormTang}, we see
\begin{equation}\label{Eq:PCeq0}
\nabla_{X_t}II_{ii} = X_t(\frac{1}{\kappa_0} (g_N)_{ii}) = || X_i ||_{g_N}^2 X_t(\kappa_i)
\end{equation}
 for any $i \in \{2,\dots, n\}$. Moreover, from the Codazzi equations, we find
\begin{align} \label{Eq:PCeq1}
 \nabla_{X_t}II_{ii}& = \nabla_{X_i}II_{ti} =  -\Gamma_{ii}^l II_{lt} + \Gamma_{ti}^l II_{li} = 0.
\end{align}
Meanwhile,
\begin{equation} \label{Eq:PCeq2}
X_i(\kappa_0)= \nabla_{X_i}II_{tt} = \nabla_{X_t}II_{ti} = -\Gamma_{tt}^l II_{li} + \Gamma_{ti}^l II_{lt} = 0.
\end{equation}
Thus, from \eqref{Eq:PCeq0}, \eqref{Eq:PCeq1} and \eqref{Eq:PCeq2} it follows
the principal curvatures $\kappa_0$ and $\kappa_i=\frac{1}{\kappa_0}$ are constant. 
\\

Due to Currier \cite{Cu}, it follows that $\kappa_0 \neq \kappa_i$ for $i \neq 1$, since otherwise $\kappa_0 = \kappa_i =1$ and 
therefore the hypersurface must be a horosphere, which contradicts the assumption that the hypersurface has more than 
one end. Therefore, locally the hypersurface has exactly two distinct constant principal curvatures $\kappa_0$ of multiplicity 1 
and $\frac{1}{\kappa_0}$ of multiplicity $n-1$.  Then it follows from the classification of so-called isoparametric 
hypersurfaces in hyperbolic space due to Cartan \cite{Cartan}, that the hypersurface is an equidistant hypersurface about a geodesic 
line.
\end{proof}


\section{Calculations for Vertical Graphs in Hyperbolic Space}\label{Sec:AsympBd}

In \cite{Eps2, AlCu, AlCu2} it is observed that a complete, noncompact, properly embedded, strictly convex hypersurface in hyperbolic space can be 
realized globally in Busemann coordinates as a graph of a height function over a domain in a horosphere. Moreover, in \cite{AlCu, AlCu2} it is shown that embedded 
hypersurfaces with nonnegative sectional curvature give rise to height functions that are subharmonic with respect to the Euclidean metric when restricted to any 2-plane. 
Then, as a consequence of the theory of subharmonic functions on domains in the plane, in \cite{AlCu, AlCu2} it is concluded that a hypersurface embedded in hyperbolic 
space with nonnegative sectional curvature must have a single point asymptotic boundary when the asymptotic boundary is connected.
\\

Moving on to the situations when only Ricci curvature is assumed to be nonnegative, the theory of subharmonic functions in dimension 2 is not applicable and the method in 
\cite{AlCu, AlCu2} fails in dimensions larger than 2.  Our approach here is to employ the theory of $n$-subharmonic functions instead of subharmonic functions in dimensions $n>2$.
This approach is not surprising since the analysis of $n$-subharmonic functions in dimension $n$ is somewhat similar to that of subharmonic functions in dimension 2. However, it is rather surprising that our calculation for Ricci curvature in dimensions larger than 2 (cf. Theorem \ref{Lem:n-subharmonic}) 
goes perfectly in line with what was observed in \cite[Theorem 2.1]{AlCu} for Gaussian curvature in dimension 2.
\\

Consider the upper half-space model ${\r}^{n+1}_+$ of hyperbolic space with standard coordinates $(x_1,\dots, x_n, x_{n+1})$ and hyperbolic metric
$$
g_{\th}= \frac{dx_1^2 + \cdots + dx_{n+1}^2}{x_{n+1}^2}.
$$
In the upper half-space model of hyperbolic space we note that
$$
\nabla^{\mathbb{H}}_{\frac{\partial}{\partial x_i}} \frac{\partial}{\partial x_j} = \delta_{ij}\frac{1}{x_{n+1}} \frac{\partial}{\partial x_{n+1}}  
\quad \text{and} \quad \nabla^{\mathbb{H}}_{\frac{\partial}{\partial x_{\alpha}}} \frac{\partial}{\partial x_{n+1}} = -\frac{1}{x_{n+1}} 
\frac{\partial}{\partial x_{\alpha}}.
$$
Note that in our convention Greek letters run from $1, 2, \dots, n+1$ while Latin letters run from $1, 2, \dots, n$.
Let $\Sigma$ be the vertical graph of a function $x_{n+1}=f(x_1,\dots,x_n)$ over a domain $\Omega$ in 
$$
\mathbb{R}^n = \{(x_1, x_2, \dots, x_{n+1})\in {\r}^{n+1}_+: x_{n+1}=0\}. 
$$
Denote the induced tangent vectors on $\Sigma$ by 
$$
X_i = \frac{\partial}{\partial x_i} + f_i \frac{\partial}{\partial x_{n+1}}
$$
where $f_i = \frac{\partial f}{\partial x_i}$. Then the induced metric on $\Sigma$ as a hypersurface in $\h^{n+1}$ is given by
$$
g:= f^{-2}(\delta_{ij} + f_i f_j) dx^i dx^j
$$
with inverse
$$
g^{ij} = f^2(\delta^{ij} - \frac {f_if_j}{1 + |Df|^2}),
$$
where we have denoted the Euclidean norm squared of the Euclidean gradient of $f$ by
$$|D f|^2 = \delta^{ij} f_i f_j = \sum_{i=1}^n f_i^2.$$
Then a straightforward computation gives
$$\nabla^{\mathbb{H}}_{X_i} X_j = f^{-1}((\delta_{ij} + ff_{ij} - f_if_j)\frac{\partial}{\partial x_{n+1}} - f_i\frac{\partial}{\partial x_j} - f_j\frac{\partial}{\partial x_i}).$$
Hence, with respect to unit normal
$$\nu = \frac f{(1+|Df|^2)^\frac 12}(-f_1, -f_2, \cdots, -f_n, 1)$$
on $\Sigma$, we compute the second fundamental form of $\Sigma$ 
\begin{equation}\label{Eq:2ndFForm}
II_{ij} = \meta{\nabla^{\mathbb{H}}_{X_i} X_j}{\nu} = \frac 1{f^2(1+ |Df|^2)^\frac 12} (\delta_{ij} + f_if_j + f f_{ij}).
\end{equation}
Moreover, denoting the Euclidean Laplacian of $f$ by $\Delta f$, it follows that the mean curvature of $\Sigma$ is 
$$
\aligned
H & = \frac 1{(1+|Df|^2)^\frac 12}\sum_{i,j = 1}^n(\delta_{ij} - \frac {f_if_j}{1 + |Df|^2}) (\delta_{ij} + f_if_j + ff_{ij}) \\
& =  \frac 1{(1+|Df|^2)^\frac 12}(n + f\Delta f - \frac {f}{1 +|Df|^2}\sum_{i, j=1}^n f_{ij}f_if_j).
\endaligned
$$
Now at any point $x \in \r^n$  where $h=\log f$ is finite and $Df(x) \neq 0$, we may choose local coordinates where $\frac{\partial}{\partial x_1} = \frac{Df}{|Df|}$ is the Euclidean unit vector in the direction of $Df$ and with $f_j(x) = \frac{\partial f}{\partial x_j}(x)=0$ for all $j \neq 1$. In such coordinates $f_1^2=|Df|^2$ so we may write the mean curvature of $\Sigma$ at such a point $x$ as 
\begin{equation}\label{equ:mean-cur}
H  =  \frac f{(1+ f_1^2)^\frac 32}(f_{11} + \frac {1 + f_1^2}f ) + \frac f{(1+f_1^2)^\frac 12}\sum_{i = 2}^n (f_{ii} + \frac 1f).  
\end{equation}

Next we calculate the Ricci curvature for the vertical graph $\Sigma$ in hyperbolic space via Gauss equations
$$
R^{\Sigma}_{ijkl} = - (g_{ik}g_{jl} - g_{il}g_{jk}) + (II_{ik}II_{jl} - II_{il}II_{jk}).
$$ 
From \eqref{Eq:2ndFForm} it follows that the Ricci curvature tensor has components
\begin{align} \label{Eq:RicciComps}
R_{ik} & = - (n-1)g_{ik} + \frac 1{f^2(1+|Df|^2)}\sum_{j,l=1}^n(\delta_{jl} - \frac {f_jf_l}{1 + |Df|^2}) \notag \\ 
& ((\delta_{ik} + f_if_k +ff_{ik})(\delta_{jl} + f_jf_l + f f_{jl}) 
- (\delta_{il} + f_if_l +ff_{il})(\delta_{jk} + f_jf_k + f f_{jk}) ) \notag \\
& = -(n-1)g_{ik} + \frac 1{f^2(1+|Df|^2)}((\delta_{ik}+f_if_k+ff_{ik})( n + f\Delta f - \frac {f}{1 +|Df|^2} \sum_{j,l=1}^nf_{jl}f_jf_l) \notag \\
& \quad - \sum_{l=1}^n(\delta_{il} + f_if_l +ff_{il})(\delta_{lk} + f f_{lk} -  \frac {f }{1 + |Df|^2} \sum_{j=1}^n f_{jk} f_j f_l )). 
\end{align}

Now, let us consider the gradient of $f$ with respect to the induced metric $g$
$$
\nabla^g f = g^{ij} f_i X_j = f^2(\delta^{ij} - \frac{f_i f_j}{1+|Df|^2})f_i X_j =\frac{f^2}{1+|Df|^2}\sum_{j=1}^n f_j X_j,
$$
and its normalization
$$\frac{\nabla_g f}{||\nabla_g f||_g} =\frac{f}{|Df|(1+|Df|^2)^{\frac{1}{2}}}\sum_{j=1}^n f_j X_j.$$ 
Denoting the components of the normalized gradient of $f$ by
$$
\bar f^i = \frac f{|Df|(1 + |Df|^2)^\frac 12} f_i,
$$ 
from \eqref{Eq:RicciComps} we calculate the Ricci curvature in the direction of the normalized gradient of $f$
\begin{align}\label{Eq:MainCalc1}
R_{ik}\bar f^i\bar f^k & = - (n-1)  + \frac 1{|Df|^2(1+|Df|^2)^2}\sum_{i,k=1}^n f_i f_k \notag \\ 
& \quad((\delta_{ik}+f_if_k+ff_{ik})( n + f\Delta f - \frac {f}{1 +|Df|^2} \sum_{j,l=1}^nf_{jl}f_jf_l) \notag \\
& \quad - \sum_{l=1}^n(\delta_{il} + f_if_l +ff_{il})(\delta_{lk} + f f_{lk} - \frac{f }{1 + |Df|^2} \sum_{j=1}^n f_{jk} f_j f_l )) \notag \\
& = - (n-1)  + \frac 1{|Df|^2(1+|Df|^2)^2}  \\ 
& \quad ((|Df|^2+|Df|^4+f \sum_{i,k=1}^n f_{ik} f_i f_k)( n + f\Delta f - \frac {f}{1 +|Df|^2}\sum_{j,l=1}^n f_{jl}f_jf_l) \notag \\ 
& \quad - \sum_{l=1}^n (f_l + |Df|^2f_l +f \sum_{i=1}^n f_{il} f_i)(f_l + f \sum_{k=1}^n f_{lk} f_k  -  \frac {f}{1 + |Df|^2} \sum_{j,k=1}^n f_{jk} f_j f_l f_k)) \notag.
\end{align}
For convenience, we denote 
$$
\sum_{i,j=1}^nf_{ij}f_if_j = H_1(f) \quad \text{and} \quad \sum_{i,j, k=1}^nf_{ik}f_{kj}f_if_j = H_2(f).
$$
Then we may write \eqref{Eq:MainCalc1} as
\begin{align}\label{Eq:MainCalc2}
R_{ik}\bar f^i\bar f^k &  = - (n-1) +  \frac 1{|Df|^2(1+|Df|^2)^2}(|Df|^2(1 + |Df|^2) + fH_1(f))(n + f\Delta f \notag \\ 
& \quad - \frac {fH_1(f)}{1 + |Df|^2})  - (|Df|^2(1 + |Df|^2) + 2fH_1(f) + f^2 H_2(f) -  \frac {f^2 (H_1(f))^2}{1 +|Df|^2})) \notag \\
& = - (n-1) +  \frac 1{|Df|^2(1+|Df|^2)^2}(n|Df|^2(1 + |Df|^2) + nfH_1(f) \notag \\ 
& \quad + f\Delta f |Df|^2(1 + |Df|^2) + f^2H_1(f)\Delta f  - fH_1(f) |Df|^2 - \frac {f^2(H_1(f))^2}{1 + |Df|^2}  \\
& \quad - (|Df|^2(1 + |Df|^2) + 2fH_1(f) + f^2 H_2(f) -  \frac {f^2 (H_1(f))^2}{1 +|Df|^2})) \notag \\
&  = - (n-1) \frac {|Df|^2}{1+|Df|^2} + \frac f{|Df|^2(1+|Df|^2)^2}((n-2)H_1(f) + \Delta f |Df|^2(1 + |Df|^2) \notag \\ 
& \quad + fH_1(f)\Delta f  - H_1(f) |Df|^2 -  f H_2(f)) \notag.
\end{align}

Now, as above, at any given point where $h=\log f$ is finite and $Df \neq 0$, we choose a local normal 
coordinate such that $\frac {\partial}{\partial x_1}$ is a Euclidean unit vector in the direction of $Df$.
Then pointwise we may simplify \eqref{Eq:MainCalc2} as follows: 
\begin{align}
R_{ik}\bar f^i\bar f^k &  = - (n-1) \frac {f_1^2}{1+ f_1^2} + \frac f{(1+ f_1^2)^2}((n-2)f_{11} + \Delta f (1 + f_1^2) \notag \\ 
&\quad  + ff_{11}\Delta f  - f_{11}f_1^2  -  f \sum_{i=1}^n f_{1i}^2 ) \notag \\
&  = - (n-1) \frac {f_1^2}{1+ f_1^2} + \frac f{(1+ f_1^2)^2}((n-1)(f_{11} + \frac {1+ f_1^2}f) - (n-1) \frac {1+f_1^2}f \notag \\  
& \quad +  f (\frac {1 + f_1^2}f + f_{11}) \sum_{i=2}^n f_{ii}  -  f \sum_{i=2}^n f_{1i}^2 ) \notag \\
&  = \frac {f^2}{(1+ f_1^2)^2}(\frac {1 + f_1^2}f + f_{11}) \sum_{i=2}^n (f_{ii} + \frac 1f)  - (n-1) - \frac {f^2}{(1+ f_1^2)^2}\sum_{i=2}^n f_{1i}^2.
\end{align}
But then, since the Ricci curvature is nonnegative, it follows that $R_{ik}\bar f^i\bar f^k\geq 0$ so
\begin{equation}\label{equ:key}
[\frac f{(1 + f_1^2)^\frac 32}(\frac {1 + f_1^2}f + f_{11})] [\frac f{(1 +f_1^2)^\frac 12} \sum_{i=2}^n (f_{ii} + \frac 1f)] \geq (n-1).
\end{equation}
Note that the sum of the two factors in \eqref{equ:key} is the mean curvature in the light of \eqref{equ:mean-cur}.

\begin{lemma} On a hypersurface in hyperbolic space with nonnegative Ricci curvature the mean curvature of the hypersurface $H\geq n$.
\end{lemma}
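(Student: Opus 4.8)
The plan is to prove the estimate pointwise in terms of the principal curvatures rather than directly manipulating the bracketed graph quantities. Fix a point of the hypersurface and let $\kappa_1,\dots,\kappa_n$ be the principal curvatures there, computed with the convex orientation (the upward unit normal $\nu$ fixed above). The starting point is the pointwise characterization of nonnegative Ricci curvature recorded in the introduction: for every principal direction one has $\kappa_i\big(\sum_{j=1}^n\kappa_j\big)\geq n-1+\kappa_i^2$, that is,
\begin{equation*}
\kappa_i H-\kappa_i^2\geq n-1\qquad\text{for every }i=1,\dots,n .
\end{equation*}
This is the same inequality that underlies \eqref{equ:key}, but now applied along each principal direction rather than only along the gradient direction of $f$. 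Before anything else, I would observe that this already settles the sign of $H$: since $\kappa_i(H-\kappa_i)\geq n-1>0$, the factors $\kappa_i$ and $H-\kappa_i$ must share a sign for each $i$, and this is only consistent if all the $\kappa_i$ carry the same sign. Fixing the convex orientation makes them positive, so $H=\sum_{i}\kappa_i>0$, and I am free to take the positive square root at the end.

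Next I would sum the above family of $n$ inequalities over $i$. Summing the left-hand sides gives $H\sum_i\kappa_i-\sum_i\kappa_i^2=H^2-\|II\|^2$, where $\|II\|^2=\sum_i\kappa_i^2$ is the squared norm of the second fundamental form, so that
\begin{equation*}
H^2-\|II\|^2\geq n(n-1).
\end{equation*}
The remaining ingredient is the elementary Cauchy--Schwarz estimate $\|II\|^2=\sum_i\kappa_i^2\geq\frac1n\big(\sum_i\kappa_i\big)^2=\frac{H^2}{n}$, which bounds the norm of the second fundamental form below by the mean curvature. Substituting this yields $H^2-\frac{H^2}{n}\geq n(n-1)$, i.e. $\frac{n-1}{n}H^2\geq n(n-1)$, hence $H^2\geq n^2$, and therefore $H\geq n$ once the positive root is taken.

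The main thing to get right is that one genuinely needs the Ricci inequality in \emph{all} principal directions, not merely the single instance \eqref{equ:key}. Indeed, writing $H=A+B$ with $A,B>0$ the two bracketed factors of \eqref{equ:key} and $AB\geq n-1$, the arithmetic--geometric mean inequality only delivers $H\geq 2\sqrt{n-1}$, which is strictly weaker than $n$ as soon as $n\geq 3$. It is precisely the summation over every principal direction together with the Cauchy--Schwarz bound on $\|II\|^2$ that upgrades $2\sqrt{n-1}$ to the sharp value $n$. The argument also makes the equality case transparent: equality in Cauchy--Schwarz requires all $\kappa_i$ equal, and then $\kappa_i^2(n-1)\geq n-1$ forces $\kappa_i=1$, so $H=n$ holds exactly on horospheres, consistent with the fact that horospheres are Ricci flat.
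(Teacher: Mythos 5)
Your proof is correct and follows essentially the same route as the paper: both arguments take the pointwise Ricci inequality $\kappa_i H \geq n-1+\kappa_i^2$ in every principal direction, sum over $i$, and apply the Cauchy--Schwarz bound $\sum_{i}\kappa_i^2 \geq \frac{1}{n}H^2$ to obtain $H^2\geq n^2$ and hence $H\geq n$. Your additional observations (the sign consistency forcing $\kappa_i>0$, the failure of the two-factor AM--GM route to reach $n$, and the equality case on horospheres) are sound but supplementary to the paper's argument.
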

\proof From the assumption that the Ricci is nonnegative, for each $i = 1,\dots, n$, one has
$$
\kappa_i H \geq n-1 + \kappa_i^2
$$
where $\kappa_i$ denote the principal curvatures. Therefore, with our choice of orientation, $\kappa_i > 0$ and
$$
H^2 \geq n(n-1) + \sum_{i=1}^n \kappa_i^2 \geq n(n-1) + \frac 1n H^2
$$
which implies that $H\geq n$.
\endproof
 
Since both sum and product are positive, the two factors on the left of the equation \eqref{equ:key} are both positive. Therefore, 
\begin{equation}\label{Eq:Main}
\sqrt{(n-1)(\frac {1 + f_1^2}f + f_{11})}\cdot\sqrt{\sum_{i=2}^n (f_{ii} + \frac 1f)} \geq (n-1)\frac {1 + f_1^2}f.
\end{equation}

\begin{theorem} \label{Lem:n-subharmonic}
Suppose that $\Sigma$ is a vertical graph of a function $x_{n+1}=f(x_1,\dots,x_n)$ in the upper half-space model of hyperbolic space with $f \in C^2$ wherever the hyperbolic height function $h=\log f$ is finite. If $\Sigma$ has nonnegative Ricci curvature, then the height function is Euclidean $n$-subharmonic. That is, 
\begin{equation}\label{Eq:n-subharmonic}
\Delta_n \log f = \text{Div}(|D\log f|^{n-2} D\log f) \geq 0
\end{equation}
wherever $h=\log f$ is finite.
\end{theorem}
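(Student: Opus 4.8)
The plan is to compute the Euclidean $n$-Laplacian of $h=\log f$ directly and to recognize that its nonnegativity is exactly what the key inequality \eqref{Eq:Main} delivers through the arithmetic--geometric mean inequality. First I would write the $n$-Laplacian in its coordinate-invariant form
$$
\Delta_n h = |Dh|^{n-2}\Delta h + (n-2)|Dh|^{n-4}\,\mathrm{Hess}\,h(Dh,Dh),
$$
which holds wherever $Dh\neq 0$ and is unchanged under a fixed orthogonal change of coordinates. Working at a point where $h$ is finite and $Df\neq 0$, I would again rotate to the adapted coordinates used for \eqref{equ:mean-cur} and \eqref{equ:key}, so that $\tfrac{\partial}{\partial x_1}$ points along $Df$ and $f_j=0$ for $j\geq 2$. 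In these coordinates $Dh$ is parallel to $\tfrac{\partial}{\partial x_1}$ with $|Dh|=|f_1|/f$, only the $(1,1)$ entry of $\mathrm{Hess}\,h$ survives in the last term, and the expression should collapse to
$$
\Delta_n h = \Big(\tfrac{|f_1|}{f}\Big)^{n-2}\Big[(n-1)h_{11}+\sum_{i=2}^n h_{ii}\Big].
$$

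Next I would translate back to $f$ using $h_i=f_i/f$ and $h_{ij}=f_{ij}/f-f_if_j/f^2$. Setting $A=\tfrac{1+f_1^2}{f}+f_{11}$ and $B=\sum_{i=2}^n\big(f_{ii}+\tfrac1f\big)$, which are exactly the quantities under the two radicals in \eqref{Eq:Main}, a short substitution ($f_{11}=A-\tfrac{1+f_1^2}{f}$ and $\sum_{i\geq2}f_{ii}=B-\tfrac{n-1}{f}$) should give
$$
(n-1)h_{11}+\sum_{i=2}^n h_{ii}=\frac1f\Big[(n-1)A+B-\frac{2(n-1)(1+f_1^2)}{f}\Big].
$$
Since $f>0$ and $(|f_1|/f)^{n-2}>0$, proving $n$-subharmonicity at such a point reduces to the single scalar inequality $(n-1)A+B\geq \tfrac{2(n-1)(1+f_1^2)}{f}$.

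This is where everything fits together. As established above, both factors in \eqref{equ:key} are positive, so $A>0$ and $B>0$, and the arithmetic--geometric mean inequality applied to the positive numbers $(n-1)A$ and $B$ yields $(n-1)A+B\geq 2\sqrt{(n-1)AB}$. But $\sqrt{(n-1)AB}=\sqrt{(n-1)A}\cdot\sqrt{B}$ is precisely the left-hand side of \eqref{Eq:Main}, which is bounded below by $(n-1)\tfrac{1+f_1^2}{f}$. Chaining the two bounds gives $(n-1)A+B\geq \tfrac{2(n-1)(1+f_1^2)}{f}$, hence $\Delta_n h\geq 0$ at every point where $Df\neq 0$. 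The conceptual crux is thus recognizing that the two summands of the mean curvature in \eqref{equ:mean-cur} are arranged exactly so that $\Delta_n h$ is AM--GM-ready against \eqref{Eq:Main}.

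The main obstacle I anticipate is not this computation but the degeneracy of $\Delta_n$ on the critical set $\{Df=0\}$, where the adapted frame is unavailable and the factor $|Dh|^{n-4}$ is singular when $n=3$. I would handle this by noting that for $n\geq 3$ both $|Dh|^{n-2}\Delta h$ and $(n-2)|Dh|^{n-4}\mathrm{Hess}\,h(Dh,Dh)=O(|Dh|^{n-2})$ vanish as $Dh\to 0$, so the classical expression extends continuously by $0$ across the critical set. Combined with the pointwise inequality above, this should upgrade the conclusion to $n$-subharmonicity in the weak (distributional/viscosity) sense needed for the later application of the theory of $n$-subharmonic functions.
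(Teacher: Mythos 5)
Your proposal is correct and essentially reproduces the paper's own argument: the paper likewise applies Young's inequality (your AM--GM step on $(n-1)A$ and $B$) to \eqref{Eq:Main} to obtain $(n-1)(\log f)_{11}+\sum_{i=2}^n(\log f)_{ii}\geq 0$ in the adapted coordinates, and then identifies this quantity with $|D\log f|^{-(n-2)}\Delta_n\log f$, exactly your expansion of the $n$-Laplacian read in reverse. Your closing treatment of the critical set $\{Df=0\}$ (where $|D\log f|^{n-2}D\log f$ extends by $0$ for $n\geq 3$, so the pointwise inequality off the critical set upgrades to the weak form) is a bit more careful than the paper, which simply says one may focus on points where $Df\neq 0$.
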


\proof 
One may focus on the points where $Df \neq 0$. From \eqref{Eq:Main} and Young's inequality, we have
$$
2(n-1) \frac {1 + f_1^2}{f^2} \leq (n-1)(\frac {f_{11}}f + \frac {1+f_1^2}{f^2}) + \sum_{i=2}^n \frac {f_{ii}}f + (n-1)\frac 1{f^2}
$$
which implies
\begin{align}
0 & \leq (n-1) \frac {f_{11}}f  - (n-1) \frac {f_1^2}{f^2} + \sum_{i=2}^n \frac {f_{ii}}f = (n-1) (\log f)_{11} + \sum_{i=2}^2 (\log f)_{ii} \notag \\
& = (n-2) |D\log f|^{-2}\sum_{i,j=1}^n (\log f)_{ij}(\log f)_i(\log f)_j + \Delta (\log f) \notag \\
& =  |D\log f|^{-(n-2)} \Delta_n \log f
\end{align}
and completes the proof.
\endproof


\section{n-Subharmonic Functions and Proof of Main Theorem}\label{Subsec:n-HarmFncts}

Let $\Sigma$ be a complete, noncompact, properly embedded hypersurface
in $\h^{n+1}$ with nonnegative Ricci curvature. It is known that $\Sigma$ is the boundary of a strictly convex body $\mathbb{U}$ in hyperbolic space. 
Then the recession set $R(\Sigma)$ for $\Sigma$ is the collection of end points at infinity of all geodesic rays which lie entirely inside $\mathbb{U}$. 
Thanks to \cite{Eps2, AlCu, AlCu2}, $\Sigma$ is also known to be a vertical graph of a height function 
over a domain in any horosphere centered at a point in the recession set (cf. \cite{Eps2, AlCu, AlCu2}). 
Let us state a lemma to collect some useful facts for us.

\begin{lemma} (cf. \cite[Proposition 2.2]{AlCu}) \label{Lem:facts-height} Suppose that $\Sigma$ is a complete, noncompact, properly embedded, strictly convex hypersurface in hyperbolic space. Then $\Sigma$ is a graph of
a height function $h:\Omega\to \r\bigcup\{-\infty\}$. Moreover the following hold:
\begin{itemize}
\item The domain $\Omega$ is convex and open in a horosphere centered at some point $p_0$ in the recession set $R(\Sigma)$.

\item The height function $h$ is continuous and locally bounded from above in $\Omega$.

\item $\mathbb{P}(\{h = -\infty\})\bigcup\{p_0\}$ is the recession set $R(\Sigma)$, where $\mathbb{P}$ is the simple orthogonal projection when using the half space model taking $p_0$ as
the infinity.
\end{itemize}
\end{lemma}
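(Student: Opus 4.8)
The plan is to work in the upper half-space model with $p_0$ placed at infinity, so that the horospheres centered at $p_0$ become the horizontal slices $\{x_{n+1}=c\}$ and the geodesics asymptotic to $p_0$ become the vertical lines $\gamma_x=\{x\}\times(0,\infty)$ for $x\in\r^n$. Since $\Sigma=\partial\mathbb{U}$ bounds the strictly convex body $\mathbb{U}$, the entire statement reduces to understanding $\mathbb{U}\cap\gamma_x$ for each $x$. I would like to show that this set is always an \emph{upward} interval $(f(x),\infty)$, possibly empty and possibly with $f(x)=0$. Convexity of $\mathbb{U}$ already forces $\mathbb{U}\cap\gamma_x$ to be a connected subinterval of $(0,\infty)$, so the real content is that whenever this interval is nonempty it is unbounded above.

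This is the crux, and it is precisely where the hypothesis $p_0\in R(\Sigma)$ enters. By definition of the recession set there is at least one geodesic ray asymptotic to $p_0$ lying entirely in $\mathbb{U}$, i.e. a vertical ray $\{x_0\}\times[t_0,\infty)\subset\mathbb{U}$. The key step is to upgrade this to a base-point-independent statement: \emph{for every} $q\in\mathbb{U}$ the vertical ray issuing upward from $q$ toward $p_0$ stays in $\mathbb{U}$. This is the hyperbolic analogue of the fact that the recession cone of a closed convex set is independent of base point, and I would establish it by a convexity and limiting argument, joining $q$ by geodesics to points far out along $\{x_0\}\times[t_0,\infty)$ and letting the endpoint tend to $p_0$. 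Granting this, $\{t:(x,t)\in\mathbb{U}\}$ is an up-set, hence equal to $(f(x),\infty)$ with $f(x)=\inf\{t:(x,t)\in\mathbb{U}\}\ge 0$. When $f(x)>0$ the vertical geodesic meets $\Sigma$ exactly once, transversally by strict convexity, so $\Sigma$ is the graph of $h=\log f$; when $f(x)=0$ the entire line $\gamma_x$ lies in $\mathbb{U}$ and we set $h(x)=-\infty$.

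The three listed properties then follow. For convexity of $\Omega=\mathbb{P}(\mathbb{U})$: given $x,y\in\Omega$, the up-ray property places $(x,T),(y,T)$ in $\mathbb{U}$ for all large $T$; the hyperbolic geodesic between them is a vertical semicircle contained in $\mathbb{U}$, and its vertical projection covers the Euclidean segment $[x,y]$, so $[x,y]\subset\Omega$. Openness of $\Omega$ is immediate from openness of $\mathbb{U}$ under the open projection $\mathbb{P}$. Local boundedness from above of $h$ is upper semicontinuity coming from openness: near any $x_\ast\in\Omega$ there is a fixed height $t_\ast$ with $(x,t_\ast)\in\mathbb{U}$ for all nearby $x$, whence $f(x)<t_\ast$; continuity of $h$ where it is finite follows from smoothness of $\Sigma$, while continuity into $\r\cup\{-\infty\}$ at the interface with $\{h=-\infty\}$ follows from closedness of $\overline{\mathbb{U}}$ and of the properly embedded $\Sigma$ (a finite limiting height would produce a boundary point on a ray meant to lie entirely in $\mathbb{U}$). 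Finally, if $h(x)=-\infty$ then $\gamma_x\subset\mathbb{U}$ is a full line whose downward endpoint is the ideal point $x\in\r^n$, so $x\in R(\Sigma)$; conversely, for a recession point $p\ne p_0$ the geodesic from $p$ to $p_0$ lies in $\mathbb{U}$ and, being the vertical line $\gamma_p$, forces $f(p)=0$. This yields $\mathbb{P}(\{h=-\infty\})\cup\{p_0\}=R(\Sigma)$.

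I expect the main obstacle to be the base-point-independent up-ray property of the second paragraph, together with the closely related continuity of $h$ across the $\{h=-\infty\}$ locus. Both are statements about the asymptotic geometry of the convex body $\mathbb{U}$ and should be handled by limiting arguments exploiting convexity, compactness of $\partial_\infty\h^{n+1}$, and closedness of $\overline{\mathbb{U}}$; once they are in hand the remaining verifications are routine consequences of openness and the graph structure.
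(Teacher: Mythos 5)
Your proposal is correct and follows essentially the same route as the paper, which itself only sketches the argument and defers details to \cite[Proposition 2.2]{AlCu}: your up-ray/interval analysis of the vertical geodesics is exactly the paper's trichotomy of geodesic lines from $p_0$ (lying entirely inside $\mathbb{U}$, entirely outside, or crossing $\Sigma$ transversally), and your semicircle-projection and open-projection arguments simply fill in the paper's one-line claims that $\Omega$ is convex and open. The key base-point-independent up-ray property you isolate is indeed provable by the convexity-plus-limiting argument you indicate, with strict convexity of $\Sigma$ used to rule out tangential contact so that limit rays lie in the open body $\mathbb{U}$ rather than merely in $\overline{\mathbb{U}}$.
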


\noindent
The facts in Lemma \ref{Lem:facts-height} are based on the observation that all geodesic lines from the point $p_0$ are exactly and exclusively of three kinds: those lying entirely inside $\mathbb{U}$; those lying entirely outside $\mathbb{U}$; those intersecting $\Sigma$ transversally.  Notice that those geodesic lines lying inside or outside can not touch $\Sigma$.
The end points of those geodesic lines inside $\mathbb{U}$ or intersecting $\Sigma$ make up the domain $\Omega$, while the rest is $\mathbb{P}(\{h=\infty\})$. 
It is easily seen that $\Omega$, which is the vertical projection of the convex body bounded by the graph, is convex and the set of all those geodesic lines outside 
$\mathbb{U}$ is closed, that is to say, $\Omega$ is open. For more details, readers are referred to the proof of \cite[Proposition 2.2]{AlCu}. 
In this section, based on the theory of $n$-subharmonic functions and $n$-polar sets in \cite{HKM, Lind}, we present an argument here to show that 
for a complete properly embedded hypersurface with nonnegative Ricci curvature, the set 
$$
\{h = -\infty\}\subset \Omega
$$
is totally disconnected, which implies the main theorem in this paper.
\\

For the convenience of the readers we recall some of the basics in the theory of $p$-subharmonic functions on domains in $\r^n$. Our introduction here is mostly based
on \cite{HKM, Lind}, therefore readers are referred to \cite{HKM, Lind} for details and proofs. First we recall Definition 7.1 of \cite{HKM} (see also Definition 5.1 of \cite{Lind}), 
which defines viscosity $p$-subharmonic functions in terms of the comparison principle.

\begin{definition} (\cite[Definition 7.1]{HKM} \cite[Definition 5.1]{Lind}) \label{Def:Psubharm}
A function $u: W \rightarrow \r \cup \{-\infty\}$ is called viscosity $p$-subharmonic in a domain $W\subset \r^n$, if

1) $u$ is upper semi-continuous in $W$;

2) $u \not\equiv-\infty$ in $W$;

3) For each $W_{1}\subset\subset W$, the comparison principle holds: if $v \in C(\overline{W}_{\hspace{-0.05cm}1})$ is $p$-harmonic in $W_{1}$ and
$v|_{\partial W_{1}} \geq u|_{\partial W_{1}}$, then $v \geq u$ in $W_{1}$.
\end{definition}

\noindent
The most important analytic tools for us are Theorems 10.1 and 2.26 in \cite{HKM}, which we state as follows: 

\begin{theorem} (\cite[Theorems 10.1 and 2.26]{HKM}) \label{Thm:main-tool}
Suppose that $u$ is a viscosity $p$-subharmonic function defined in a domain $W \subset \r^n$. Then its $p$-polar set $\{u = -\infty\}\subset W$ is of 
Hausdorff dimension at most $n-p$. Particularly, for a viscosity $n$-subharmonic function $u$, the set $\{u=-\infty\}$ is of zero $n$-capacity and 
$$
\text{dim}_{\mathscr{H}}(\{u=-\infty\}) = 0.
$$  
\end{theorem}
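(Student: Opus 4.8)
The plan is to obtain both assertions by composing the two structural results of nonlinear potential theory named in the statement, in two steps: first identify the sublevel set $\{u=-\infty\}$ as a set of vanishing $p$-capacity, and then convert that capacity estimate into a bound on Hausdorff dimension. Throughout I write $C_p$ for the variational $p$-capacity of \cite{HKM}.

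First I would reconcile the two notions of $p$-subharmonicity in play. The function $u$ of Definition \ref{Def:Psubharm} is $p$-subharmonic via the comparison principle, whereas \cite{HKM} develops capacity, polar sets and the comparison principle around the variational (weak) formulation of the $p$-Laplacian $\Delta_p = \text{Div}(|D\cdot|^{p-2}D\cdot)$; the known equivalence of the viscosity and weak formulations (see also \cite{Lind}) is the bridge that makes the HKM machinery applicable. Granting this, the oddness of the operator gives that $-u$ is $p$-superharmonic with $-u\not\equiv+\infty$, so its singular set
$$\{u=-\infty\}=\{-u=+\infty\}$$
is by definition a $p$-polar set. By Theorem 10.1 of \cite{HKM}, a set in $\r^n$ is $p$-polar if and only if it has vanishing $p$-capacity, whence $C_p(\{u=-\infty\})=0$.

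Next I would invoke the capacity--Hausdorff comparison of Theorem 2.26 in \cite{HKM}, to the effect that any $E\subset\r^n$ with $C_p(E)=0$ satisfies $\mathscr{H}^s(E)=0$ for every $s>n-p$. Applied to $E=\{u=-\infty\}$ this yields $\dim_{\mathscr{H}}(\{u=-\infty\})\le n-p$, the first assertion. Specializing to $p=n$, vanishing $n$-capacity is precisely the displayed zero $n$-capacity claim, and the comparison now forces $\mathscr{H}^s(\{u=-\infty\})=0$ for every $s>0$, so $\dim_{\mathscr{H}}(\{u=-\infty\})=0$, the second assertion.

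The main obstacle I anticipate lies not in the dimension count, which is a direct citation, but in the first step: one must verify that the viscosity notion used here genuinely matches the potential-theoretic notion under which Theorem 10.1 is established, and that the abstract polar set coincides \emph{exactly} with $\{u=-\infty\}$ rather than differing from it by a capacity-null exceptional set. Since \cite{HKM} treats all of these objects within a single framework, the cleanest route is to adopt their definition of $p$-subharmonicity from the outset and cite the equivalence with the comparison-principle formulation of Definition \ref{Def:Psubharm}, after which both displayed conclusions follow formally.
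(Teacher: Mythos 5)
Your proposal is correct and follows exactly the citation chain the paper intends: $\{u=-\infty\}$ is $p$-polar by definition, has zero $p$-capacity by \cite[Theorem 10.1]{HKM}, and the dimension bound then follows from \cite[Theorem 2.26]{HKM} (the paper offers no independent proof, as this theorem is a direct quotation of those two results). Your one anticipated obstacle is in fact vacuous: the comparison-principle definition used here \emph{is} HKM's own Definition 7.1, under which their polar-set theory is developed, so no viscosity--weak equivalence bridge is needed.
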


\noindent
Therefore, the main issue in proving the Main Theorem is to verify that the height functions for complete properly embedded hypersurfaces in hyperbolic space 
with nonnegative Ricci curvature are viscosity $n$-subharmonic in $\Omega \subset\r^n$. In the light of Definition \ref{Def:Psubharm}, we only need to verify the comparison principle. For this purpose we introduce the notion of weakly $p$-subharmonic functions.

\begin{definition} (\cite[Definition 2.12]{Lind}) For $p\geq1$ and a domain $W\subset \r^n$, a function
$u \in W_{loc}^{1,p}(W) $ satisfying 
\begin{equation}
\int \meta {|D u|^{p-2}D u}{D \eta} dx \leq 0 \quad \text{for each} \,\,\eta \in C_{0}^{\infty}(W) \ \text{and $\eta \geq 0$}
\end{equation}
is called a weakly $p$-subharmonic function in $W$.
\end{definition}

\noindent
From Theorem 2.15 in \cite{Lind} and subsequent remarks we have the following comparison principle for weakly $p$-subharmonic functions.

\begin{theorem}(\cite[Theorem 2.15]{Lind}) \label{Lem:WeakCompPrinc}
Suppose that $u$ is a weakly $p$-subharmonic function and $v$ is a $p$-harmonic function in a bounded domain $W \subset \r^n$. If for every $\zeta \in \partial W$
\begin{equation}
\limsup_{x\rightarrow\zeta}u(x)\leq\liminf_{x\rightarrow\zeta}v(x)
\end{equation}
with the possibilities $\infty \leq \infty$ and  $-\infty \leq -\infty$  excluded, then $u\leq v$ almost everywhere in $\Omega$.
\end{theorem}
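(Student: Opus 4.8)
The plan is to establish the comparison principle by the standard monotonicity method for the $p$-Laplace operator, using the boundary hypothesis to manufacture an admissible test function with compact support. The whole argument hinges on testing the two weak formulations against a common nonnegative truncation and then invoking strict monotonicity of $a \mapsto |a|^{p-2}a$ to upgrade a single integral inequality into pointwise equality of gradients.

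First I would exploit the boundary condition to localize. Since $\limsup_{x\to\zeta}u(x)\leq\liminf_{x\to\zeta}v(x)$ at every $\zeta\in\partial W$ (with the indeterminate cases excluded), one has $\limsup_{x\to\zeta}(u-v)(x)\leq 0$, so for each fixed $\epsilon>0$ the superlevel set $\{u-v>\epsilon\}$ cannot accumulate at $\partial W$; as $W$ is bounded, its closure is a compact subset of $W$. Consequently the truncation $\eta_\epsilon:=(u-v-\epsilon)_+=\max\{u-v-\epsilon,0\}$ is nonnegative, lies in $W^{1,p}_0(W)$, and satisfies $D\eta_\epsilon=(Du-Dv)\,\chi_{\{u-v>\epsilon\}}$ almost everywhere. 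By density of $C_0^\infty(W)$ among nonnegative functions in $W^{1,p}_0(W)$, this $\eta_\epsilon$ is an admissible test function in the weak formulation defining $u$.

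Next I would insert $\eta_\epsilon$ into both weak relations and subtract. Weak $p$-subharmonicity of $u$ gives $\int\meta{|Du|^{p-2}Du}{D\eta_\epsilon}\,dx\leq 0$, while $p$-harmonicity of $v$ gives $\int\meta{|Dv|^{p-2}Dv}{D\eta_\epsilon}\,dx=0$. Subtracting and using the support of $D\eta_\epsilon$ yields
$$\int_{\{u-v>\epsilon\}}\meta{|Du|^{p-2}Du-|Dv|^{p-2}Dv}{Du-Dv}\,dx\leq 0.$$
The decisive step is the strict monotonicity of $a\mapsto|a|^{p-2}a$ on $\r^n$ for $p>1$, which makes the integrand nonnegative and strictly positive wherever $Du\neq Dv$. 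Hence $Du=Dv$ almost everywhere on $\{u-v>\epsilon\}$, so $D\eta_\epsilon=0$ almost everywhere in $W$. A function in $W^{1,p}_0(W)$ with vanishing gradient is constant on each component and, having zero boundary values, vanishes identically; thus $\eta_\epsilon\equiv 0$, that is $u\leq v+\epsilon$ almost everywhere, and letting $\epsilon\downarrow 0$ gives $u\leq v$ almost everywhere in $W$.

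I expect the main obstacle to be the rigorous passage from the one-sided boundary inequality to the compact support and zero trace of $\eta_\epsilon$: one must verify that the exclusion of the indeterminate cases $\infty\leq\infty$ and $-\infty\leq-\infty$ is precisely what guarantees $\limsup_{x\to\zeta}(u-v)(x)\leq 0$ at \emph{every} boundary point, so that no superlevel set $\{u-v>\epsilon\}$ reaches $\partial W$. By contrast, the monotonicity estimate itself is elementary; its role is simply to convert the integral inequality into the pointwise identity $Du=Dv$ that forces $\eta_\epsilon$ to be trivial.
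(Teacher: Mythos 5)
Your proof is correct and is essentially the standard argument for the cited result \cite[Theorem 2.15]{Lind}, which the paper itself quotes without proof: test both weak formulations against the truncation $(u-v-\epsilon)_+$, whose compact support and membership in $W^{1,p}_0(W)$ follow from the boundary hypothesis (with the indeterminate cases excluded) exactly as you argue, and then invoke strict monotonicity of $a\mapsto|a|^{p-2}a$. The only caveat is that strict monotonicity requires $p>1$ (the paper's Definition of weak $p$-subharmonicity nominally allows $p\geq 1$), which is harmless here since the theorem is applied with $p=n\geq 3$.
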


\noindent
Consequently, due to Theorem \ref{Lem:n-subharmonic} in the previous section, away from the recession set, the height function $h$ is clearly weakly 
$n$-subharmonic and satisfies the comparison principle. Now we are ready to prove our main theorem.
\\

\noindent{\it Proof of the Main Theorem}: \quad
We claim that the height function $h=\log f$ is viscosity $n$-subharmonic in its domain $\Omega$ as defined in Lemma \ref{Lem:facts-height}. 
It is clear that $h\not\equiv-\infty$ and that $h$ is upper semi-continuous. One only needs to verify the Comparison
Principle in $3)$ of  Definition \ref{Def:Psubharm}. Assume otherwise, that condition $3)$ does not hold for $h$ in $\Omega$.  
Let $v \in C(\overline{W})$ be an $n$-harmonic function in $W\subset\subset \Omega$ with $v \geq h$ on $\partial W$ but $h>v$ in some nonempty open subset $W_0 \subset W$ with 
$h=v$ on $\partial W_{0}$. Then it is easily seen  that $W_0 \cap \{h = -\infty\} = \emptyset$. That is to say the height function $h$ is finite in 
$W_0$ and therefore satisfies the comparison principle Theorem \ref{Lem:WeakCompPrinc} on $W_0$, which is a contradiction. 
Thus, the height function is indeed viscosity $n$-subharmonic.
\\

In the light of Theorem \ref{Thm:main-tool}, we know that $\text{dim}_{\mathscr{H}}(R(\Sigma)) =0$. So 
the asymptotic boundary is totally disconnected, that is, every connected component of the asymptotic boundary can only be a single point. 
If the asymptotic boundary has more than two points,  then we know $\partial_{\infty}\Sigma$ consists of exactly two points by the 
Cheeger-Gromoll splitting theorem \cite{CG} and the discussion in Section \ref{Sec:MultCompBd}. Hence, 
by Theorem \ref{Thm:TwoEndRigidity}, it follows that $\Sigma$ is an equidistant hypersurface. Otherwise, the asymptotic boundary 
must consist of a single point. So the proof of the Main Theorem is complete.\stop


\end{document}